\def\NZQ{\mathbb}               
\def\NN{{\NZQ N}}
\def\frk{\mathfrak}               
\def\Phi{{\frk N}}
\def\Pc{{\mathcal P}}
\def\I{{\mathcal I}}
\def\opn#1#2{\def#1{\operatorname{#2}}} 
\opn\chara{char} \opn\length{\ell} \opn\pd{pd} \opn\rk{rk}
\opn\projdim{proj\,dim} \opn\injdim{inj\,dim} \opn\rank{rank}
\opn\depth{depth} \opn\grade{grade} \opn\height{height}
\opn\size{size}
\opn\embdim{emb\,dim} \opn\codim{codim}
\opn\Tr{Tr} \opn\bigrank{big\,rank}
\opn\superheight{superheight}\opn\lcm{lcm}
\opn\trdeg{tr\,deg}
\opn\reg{reg} \opn\lreg{lreg} \opn\ini{in} \opn\lpd{lpd}
\opn\size{size}\opn{\mult}{mult}
\opn{\Cl}{Cl}
\opn\div{div} \opn\Div{Div} \opn\cl{cl} \opn\Cl{Cl}
\opn\Spec{Spec} \opn\Supp{Supp} \opn\supp{supp} \opn\Sing{Sing}
\opn\Ass{Ass} \opn\Min{Min} \opn\cl{cl}
\opn\Ann{Ann} \opn\Rad{Rad} \opn\Soc{Soc}
\opn\Syz{Syz} \opn\Im{Im} \opn\Ker{Ker} \opn\Coker{Coker}
\opn\Am{Am} \opn\Hom{Hom} \opn\Tor{Tor} \opn\Ext{Ext}
\opn\End{End} \opn\Aut{Aut} \opn\id{id} \opn\ini{in}
\opn\nat{nat}
\opn\pff{pf}
\opn\Pf{Pf} \opn\GL{GL} \opn\SL{SL} \opn\mod{mod} \opn\ord{ord}
\opn\Gin{Gin}
\opn\Hilb{Hilb}\opn\adeg{adeg}\opn\std{std}\opn\ip{infpt}
\opn\Pol{Pol}
\opn\sat{sat}
\opn\Var{Var}
\opn\Gen{Gen}
\opn\lex{lex}
\opn\div{div}
\opn\aff{aff} \opn\con{conv} \opn\relint{relint} \opn\st{st}
\opn\lk{lk} \opn\cn{cn} \opn\core{core} \opn\vol{vol}
\opn\link{link} \opn\star{star}
\opn\gr{gr}
\def\Ic{{\mathcal I}}
\def\Jc{{\mathcal J}}
\def\Cc{{\mathcal C}}
\def\Qc{{\mathcal Q}}
\def\Rc{{\mathcal R}}
\def\pot#1#2{#1[\kern-0.28ex[#2]\kern-0.28ex]}
\opn\dirlim{\underrightarrow{\lim}}
\opn\inivlim{\underleftarrow{\lim}}
\def\Implies{\ifmmode\Longrightarrow \else
        \unskip${}\Longrightarrow{}$\ignorespaces\fi}
\def\implies{\ifmmode\Rightarrow \else
        \unskip${}\Rightarrow{}$\ignorespaces\fi}
\def\iff{\ifmmode\Longleftrightarrow \else
        \unskip${}\Longleftrightarrow{}$\ignorespaces\fi}
\def\NZQ{\mathbb}        
\def\NN{{\NZQ N}}
\def\frk{\mathfrak}        
\def\Phi{{\frk N}}
\def \P{{\mathcal P}}
\newtheorem{Theorem}{Theorem}[section]
\newtheorem{Lemma}[Theorem]{Lemma}
\newtheorem{Proposition}[Theorem]{Proposition}
\theoremstyle{definition}
\begin{document}
\title{Toric representations of algebras defined by certain nonsimple polyominoes}
\author {Akihiro Shikama}

\thanks{}

\subjclass[2010]{13C05, 05E40.}
\keywords{polyominoes, toric ideals, toric rings}

\address{Akihiro Shikama, Department of Pure and Applied Mathematics, Graduate School of Information Science and Technology,
Osaka University, Suita, Osaka 560-0871, Japan}
\email{a-shikama@cr.math.sci.osaka-u.ac.jp}

\begin{abstract}
In this paper we give a toric representation of  the associated ring of a polyomino which is obtained by removing a convex polyomino from its ambient rectangle.
\end{abstract}

\maketitle
\section*{Introduction}
Polyominoes are two dimensional objects which are obtained by joining squares of equal sizes edge to edge.
They are originally rooted in recreational mathematics and combinatorics. For example, they have been studied in the tiling problems of the plane.
In combinatorial commutative algebra, polyominoes are first introduced in \cite{Q} by assigning each polyomino the ideal of inner 2-minors or the {\it polyomino ideal}.
The study of ideal of $t$-minors of an  $m \times n$
matrix is a classical subject in commutative algebra.
The class of polyomino ideals widely generalizes the class of ideals of 2-minors of $m \times n$ matrix as well as the ideals of inner 2-minors attached to a one or two sided ladder.

Let $\P$ be a polyomino and $K$ be a field.
We denote by $I_\P$, the polyomino ideal attached to $\P$,
in a suitable polynomial ring over $K$.
It is natural to investigate the algebraic properties of $I_\P$  
depending on shape of $\P$.
The classes of polyominoes whose polyomino ideal is 
prime have been discussed in many papers, including \cite{HQS,HM,HQ,Q} .

The most outstanding result in these studies of polyomino ideals is
 given in  \cite{QSS}.
 It is proved that the polyomino ideals of simple polyominoes are prime by identifying their quotient rings with toric rings of the edge rings of graphs.

Recently in \cite{HQ} it is shown that the polyomino ideal of the  nonsimple polyomino which is obtained by removing a convex polyomino from its ambient rectangle is prime
by using a localization argument.
In the present paper, we give a toric representation of the quotient rings of the polyomino ideals of
this class of nonsimple polyominoes.
\section{Definition and known results}
We recall some definitions and notation from \cite{Q}.
Given $a = (i,j)$ and $b = (k,l)$ in $\NN^2$, we write $a \le b$ if $i\le k$ and $j\le l$. 
We say $a$ and $b$ are in {\em horizontal (or vertical) position}
if $j=l$ (or $i = k)$.
The set $[a,b] = \{c \in \NN^2 | a \le c \le b\}$ is  called an {\em interval}.
If $i < k $ and $j < l$ then the vertices $a$ and $b$ are called {\em diagonal corners}
and $(i,l)$ and $(k,j)$ are called {\em anti-diagonal corners} of $[a,b]$. 
The interval of the  form $C = [a,a+(1,1)]$ is called a {\em cell}.
The elements $a, a+(1,0),a+(0,1),a+(1,1)$ are called vertices of $C$. 
We denote the set of vertices of $C$ by $V(C)$.
The sets $\{a,a+(1,0)\},\{a,a+(0,1)\}, \{a+(1,0),a+(1,1)\}$ and $\{a+(0,1),a+(1,1)\}$ are called the {\em edges} of $C$. We denote the set of edges of $C$ by $E(C)$.

Let $\P$ be a finite collection of cells of $\NN^2$. The vertex set of $\Pc$ is denoted by $V(\P) = \cup_{C\in \P} V(C)$. The edge set of $\Pc$ is denoted by $E(\P ) = \cup _{C\in \P}E(C)$.
Let $C$ and $D$ be two cells of $\P$. Then $C$ and $D$ are said to be {\em connected} if there exists a sequence of cells $\Cc :C=C_1 ,\ldots,C_m=D$ such that $C_i \cap C_{i+1}$ is an edge of $C_i$ for $ i = 1, \ldots, m-1$.
If in addition, $C_i \neq C_j$ for all $i \neq j$, then $\Cc$ is called a {\em path} from $C$ to $D$.
The collection of cells $\Pc$ is called a {\em polyomino}
if any two cells of $\Pc$ are connected. For example,
Figure \ref{polyomino} shows a polyomino.

\begin{figure}[h]
\includegraphics[width= 4cm]{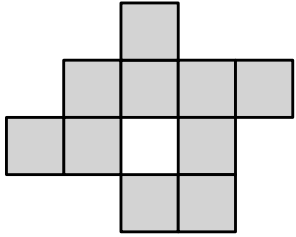}
\caption{a polyomino}\label{polyomino}
\end{figure}

Now we recall the definition of polyomino ideals from \cite{Q}.
Let $\Pc$ be a polyomino, and let $K$ be a field.
Let $S$ be the polynomial ring over $K$ with variables $x_{ij}$ with 
$(i,j) \in V(\P)$. A binomial $x_{ij}x_{kl}-x_{il}x_{kj}$ is called 
an {\em inner minor} of $\P$ if all the cells $[(r,s),(r+1,s+1)]$ with
$i\le r\le k-1$ and $j\le s \le l-1$ belong to $\Pc$.
In that case the interval $[(i,j),(k,l)]$ is called an {\em inner interval} of $\Pc$.
The ideal $I_\Pc\subset S$ generated by all inner minors of $\Pc$ is called the {\em polyomino ideal} of $\Pc$.
An interval $[a,b]$ with $a = (i,j)$ and $b=(k,l)$ is called a
horizontal edge interval of $\Pc$ if $j=l$ and the sets $\{(r,j),(r+1,j)\} \in E(\P)$ for $r = i,\ldots , k-1$.
Similarly, one defines {\em vertical edge interval}.

A polyomino $\Pc$ is called {\em simple} if 
for any two cells $C,D$ not belonging to $\P$, there exists 
a sequence of cells $C = C_1,\ldots, C_m=D$ such that each $C_i \notin \P$ and $C_i\cap C_{i+1}$ is an edge of $C_i$ for $i = 1,\ldots m-1.$
Roughly speaking, a simple polyomino is a polyomino with no ``hole''.

A polyomino $\Pc$ is called {\em  row convex} if any two cells $C = [(i,j),(i+1, j+1)],D=[(k,l),(k+1,l+1)]$ of $\P$ with $i<k$ with $j= l$ all cells $[(l,j),(l+1,j+1)] \in \P$ for $ i\le l\le k $.
Similarly, one defines {\em column convex} polyominoes.
A polyomino is called {\em convex} if it is both row and column convex.

For the polyomino ideals, the following classification of primeness is known.
\begin{Proposition}
Let $\Pc$ be a polyomino with one of the following condition.
Then $I_\P$ is a prime ideal.
\begin{enumerate}
\item $\P$ is a one sided ladder \cite{Na}.
\item $\P$ is a two sided ladder \cite{Co}.
\item $\P$ is row or column convex \cite{Q}.
\item $\P$ is balanced \cite{HQS}.
\item $\P$ is simple \cite{HM,QSS}.
\item $\P$ is obtained by removing a rectangle from its ambient rectangle \cite{S}.
\item $\P$ is obtained by removing a convex polyomino from its ambient rectangle \cite{HQ}.
\end{enumerate}
\end{Proposition}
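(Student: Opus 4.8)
The statement collects results established in the cited papers, so a proof amounts to recalling why each class yields a prime ideal; here I sketch the common toric mechanism that underlies most of these cases, following the approach of \cite{QSS}. The plan is to realize each quotient ring $S/I_\P$ as a toric ring, that is, as a subalgebra $K[M]\subseteq K[t_1,\dots,t_N]$ generated by a finite set $M$ of monomials. Concretely, one assigns to every vertex $(i,j)\in V(\P)$ a monomial $u_{ij}$ in auxiliary variables and defines the $K$-algebra homomorphism $\phi\colon S\to K[t_1,\dots,t_N]$ by $\phi(x_{ij})=u_{ij}$. Since $K[M]=\Im\phi$ is a domain (it sits inside a polynomial ring), its defining ideal $J=\ker\phi$ is prime.

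The first, easy, step is to check that every inner minor $x_{ij}x_{kl}-x_{il}x_{kj}$ maps to zero, i.e.\ that $u_{ij}u_{kl}=u_{il}u_{kj}$ whenever $[(i,j),(k,l)]$ is an inner interval of $\P$; the monomials $u_{ij}$ are chosen precisely so that this relation holds. This gives the inclusion $I_\P\subseteq J$, and hence already exhibits $S/J$ as a domain quotient of $S/I_\P$. The substance of the argument is the reverse inclusion $J\subseteq I_\P$, equivalently the assertion that the toric ideal $\ker\phi$ is generated by the inner $2$-minors. The standard route is a Gröbner basis computation: one fixes a suitable (for instance lexicographic) term order, shows that the set of inner minors forms a Gröbner basis of $I_\P$, and verifies that the resulting initial ideal is a squarefree monomial ideal of the expected height. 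Comparing with the initial ideal of $J$—or comparing Krull dimensions and using that $S/J$ is a domain—then forces $I_\P=J$.

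For the ladder cases (1)--(2) and the convex cases (3)--(4) this is essentially the classical theory of ladder determinantal and $2$-minor ideals, while for simple polyominoes (5) the combinatorial heart is the identification in \cite{QSS} of $S/I_\P$ with the edge ring of a bipartite graph attached to $\P$. In all of these the monomial assignment $u_{ij}$ can be read off directly from the shape, and the injectivity needed for the reverse inclusion follows once the hole-free geometry is encoded correctly in the underlying lattice.

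The genuinely harder cases are (6) and (7), where $\P$ is nonsimple: it has a hole, so the edge-ring description of \cite{QSS} does not apply directly and the naive monomial assignment fails to be injective on the relevant lattice. Here the main obstacle is to control the extra binomial relations created by the hole. The argument of \cite{HQ} avoids a direct Gröbner computation by a localization: one inverts a suitable variable $x_{ij}$, checks that the localized ideal $I_\P S[x_{ij}^{-1}]$ is a prime toric ideal of a simpler shape, and then descends primeness to $I_\P$ by verifying that $x_{ij}$ is a nonzerodivisor modulo $I_\P$ and that $I_\P$ is saturated with respect to $x_{ij}$. Producing an honest toric (monomial) representation for class (7) in the first place—rather than merely proving primeness indirectly—is exactly the gap that the present paper sets out to fill.
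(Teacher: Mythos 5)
Your proposal is correct and takes essentially the same approach as the paper: the Proposition is stated there without any proof at all, as a compilation of results established in the cited references, and your sketch accurately recalls the mechanisms behind those citations --- toric/edge-ring representations and Gr\"obner-basis arguments for the ladder, convex, balanced and simple classes, and the localization argument of \cite{HQ} for class (7). Your closing observation is also exactly right: supplying an explicit toric representation for class (7), rather than the indirect primeness proof of \cite{HQ}, is precisely what the rest of the paper does, using the same vertex-to-monomial assignment philosophy you describe.
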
 
 Note that the first four classes of polyominoes are simple.
Recall from \cite{Na} that a simple polyomino is called a {\em one-sided ladder} 
if it is of the following type:
\begin{figure}[h]
\includegraphics[width = 4cm]{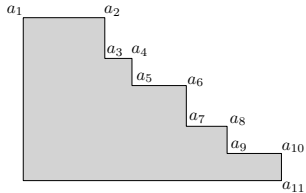}
\caption{a one-sided ladder}\label{oneside}
\end{figure}

The sequence of vertices $ a_1,\ldots ,a_s$ of the corners of one-sided ladder $\P$ other than the opposite corner of the ladder
is  called the {\em  defining sequence} of $\P$ if each $a_i$ and $a_{i+1}$ are in the horizontal or vertical position.
For example, the sequence $a_1,a_2,\ldots,a_{11}$ in Figure \ref{oneside} is the defining sequence of this one-sided ladder.

 It is also known that their exist nonsimple polyominoes whose polyomino ideals are not prime.
Figure \ref{notprime} is one of such examples given in \cite{QSS}.

\begin{figure}[h]
\includegraphics[width= 4cm]{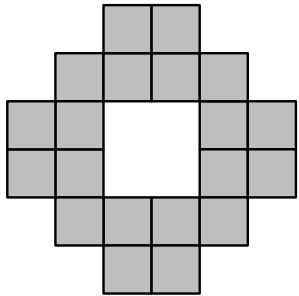}
\caption{a polyomino with non-prime polyomino ideal}\label{notprime}
\end{figure}
 
\section{The main result}
The aim of this paper is to give a toric representation of the associated ring of  a polyomino which is
obtained by removing a convex polyomino from its ambient rectangle.
In order to prove the main theorem, we give some properties of convex polyominoes.

\begin{Lemma}\label{idk}
Let $\P$ be a convex polyomino and let $\Ic$ be the unique minimal interval such that $ \P \subset \Ic$. Then,
\begin{enumerate}
\item[(a)] $\Ic \setminus \P$ consists of at most $4$ connected components;
\item[(b)] each connected component of $\Ic \setminus \P$ contains exactly one corner vertex of $\Ic$;
\item[(c)]  each connected component of $\Ic \setminus \P$ is a one-sided ladder.
\end{enumerate}
\end{Lemma}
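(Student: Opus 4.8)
The plan is to reduce the combinatorics of $\Ic\setminus\P$ to the monotonicity of the four boundary functions of $\P$. First I would normalise $\Ic$ so that its cells are $\{1,\dots,M\}\times\{1,\dots,N\}$. Minimality of $\Ic$ forces $\P$ to meet the bottom row, the top row and both side columns, and together with connectedness it forces every column and every row of $\Ic$ to contain a cell of $\P$: an entirely empty interior column (or row) would separate the cells on its two sides, contradicting connectedness. Column convexity then lets me record, for each column $i$, the occupied rows as an interval $[a_i,b_i]$, and row convexity records, for each row $j$, the occupied columns as an interval $[l_j,r_j]$. With this notation a cell of $\Ic$ lies in the complement precisely when, in its own column, it sits below the polyomino ($j<a_i$) or above it ($j>b_i$); thus $\Ic\setminus\P$ is the disjoint union of the \emph{lower region} $\{(i,j):j<a_i\}$ and the \emph{upper region} $\{(i,j):j>b_i\}$.

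The key step --- and the one I expect to be the main obstacle --- is to show that these boundary functions are monotone staircases: $a_i$ and $l_j$ are weakly decreasing and then weakly increasing (each attaining its minimal value $1$ on an interval), while $b_i$ and $r_j$ are weakly increasing and then weakly decreasing. I would prove this by excluding an interior strict peak of $a_i$ (the other three being symmetric). Given columns $i_1<i_2<i_3$ with $a_{i_2}>a_{i_1}$ and $a_{i_2}>a_{i_3}$, there are two cases. If the row-intervals of columns $i_1$ and $i_3$ overlap, they share a row below $a_{i_2}$ in which $i_1$ and $i_3$ are occupied but $i_2$ is not, contradicting row convexity. If they are disjoint, then $i_2$ becomes disjoint from one of them, and here I would invoke the \emph{overlap lemma}: in a connected polyomino two horizontally adjacent occupied columns must share an occupied row, since otherwise the cells to their left are separated from those to their right. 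Tracing adjacent columns across the gap yields the desired contradiction. Handling this disjoint case cleanly via connectedness is the delicate part of the argument.

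Granting the monotonicity, the conclusion is geometric. The lower region is bounded above by the valley-shaped graph of $a_i$: the columns where $a_i=1$ form its flat floor and carry no complement cell, so the lower region splits into a left block (columns to the left of the floor, of weakly decreasing heights $a_i-1$) and a right block (weakly increasing heights), separated by the floor. Each block is a connected staircase with weakly monotone column heights, i.e. a one-sided ladder in the sense of Figure~\ref{oneside}; the same analysis applied to $b_i$ decomposes the upper region into two further one-sided ladders. This produces at most four one-sided ladders, proving (c); and the overlap lemma shows that cells of different blocks are never edge-adjacent --- a below-cell and an above-cell in one column are separated by the occupied interval $[a_i,b_i]$, and in adjacent columns disjoint ranges are forbidden --- so these blocks are exactly the connected components, giving (a).

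Finally, for (b) I would locate each block in its corner. The left block of the lower region contains the cell $(1,1)$ exactly when column $1$ fails to reach the bottom row, which is precisely the condition for that block to be nonempty; since its columns lie left of the floor and its rows below row $N$, it meets none of the other three corner cells. As the corner vertex $(1,1)$ of $\Ic$ is a vertex only of the cell $(1,1)$, this block contains exactly that corner vertex and no other. The three remaining blocks are treated identically by symmetry, so every connected component of $\Ic\setminus\P$ contains exactly one corner vertex of $\Ic$, completing the proof.
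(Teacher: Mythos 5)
Your proposal cannot be matched against the paper's own argument for a simple reason: the paper gives none. Lemma \ref{idk} is stated bare, with no proof environment, and the text proceeds directly to the definitions needed for the main theorem; the lemma is treated as a known, evident property of convex polyominoes. What you have written therefore fills a gap the paper leaves to the reader, and your route is the natural one: encode column and row convexity in the boundary functions $a_i,b_i$ and $l_j,r_j$; prove from connectedness that every row and column of $\Ic$ meets $\P$ and that the row intervals of horizontally adjacent columns overlap; deduce that $a_i$ is valley-shaped and $b_i$ is peak-shaped; and then read (a), (b), (c) off the resulting splitting of $\Ic\setminus\P$ into at most four corner blocks. The individual steps check out: the empty-column separation argument, the Case 1 contradiction using the row $\max(a_{i_1},a_{i_3})<a_{i_2}$ (which lies in both outer intervals precisely when they overlap), the non-adjacency of distinct blocks (within one column the occupied interval $[a_i,b_i]$ separates lower from upper cells, and across adjacent columns disjoint intervals would violate the overlap lemma), and the corner-vertex count in (b), where minimality of $\Ic$ guarantees each boundary function attains its extreme value, so each block is nonempty exactly when it contains its corner cell.

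The one step that is a sketch rather than a proof is the disjoint case of your peak exclusion, which you yourself flag as delicate. To close it: if $b_{i_1}<a_{i_3}<a_{i_2}$, then $[a_{i_1},b_{i_1}]$ and $[a_{i_2},b_{i_2}]$ are disjoint; since consecutive column intervals overlap, the union $\bigcup_{k=i_1}^{i_2}[a_k,b_k]$ is an interval containing both $b_{i_1}$ and $a_{i_2}$, hence row $a_{i_3}$ is occupied in some column $k$ with $i_1\le k\le i_2$; as row $a_{i_3}$ is also occupied in column $i_3>i_2$, row convexity forces it to be occupied in column $i_2$, contradicting $a_{i_2}>a_{i_3}$. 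With that paragraph inserted, your argument is complete and self-contained; it is, in effect, the standard structure theorem for HV-convex polyominoes (unimodal boundary staircases) that the author implicitly relies on when invoking parts (a)--(c) in the proof of the main theorem.
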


Let $\P$ be  a convex polyomino.
A vertex of $\P$ is called a {\em outside corner} if it belongs to exactly one cell of $\P$.
On the other hand, a vertex of $\P$ is called a {\em inside corner} if it belongs to three cells of $\P$.
A vertex is called an {\em interior vertex} if it belongs to four cells of $\P$.
The {\em boundary vertices} are the vertices which are not interior vertices.
A cell of $\P$ is called an {\em interior cell} if all of its 4 vertices are interior vertices.
A cell of $\P$ is called an {\em boundary cell} if it is not an interior cell.
We denote the set of boundary vertices of $\P$ by $\partial \Pc$.

To each interval $[a,b]$, we attach a polyomino $\P_{[a,b]}$ in the obvious way. Such polyomino is called 
{\em rectangle}.
Hereafter, let $\P$ be a polyomino which is obtained by removing a convex polyomino
$\Qc$ from its  ambient rectangle $\P_{[a,b]}$.
We assume  $\partial \P_{[a,b]} \cap  \partial \Qc = \emptyset$,
otherwise, $\P$ is a simple polyomino and its toric representation is well studied in \cite{QSS}. Also, we assume that $a=(1,1)$ and $b = (m,n)$. 




We define two types of intervals of $\P$ as follows:

\begin{enumerate}
\item[(i)] For the lowest corner $e$  among all most left outside corners of $\Qc$, let
$\Ic_e = [a,e]$.
\item[(ii)]
The maximal vertical or horizontal intervals $\Ic$ of $\P$.
\end{enumerate}

For example,
for the given polyomino,
the intervals of types (i) and (ii) 
are displayed in Figures 4, 5.

\begin{figure}
\begin{minipage}{0.45\hsize}
\begin{center}
\includegraphics[width  =6 cm]{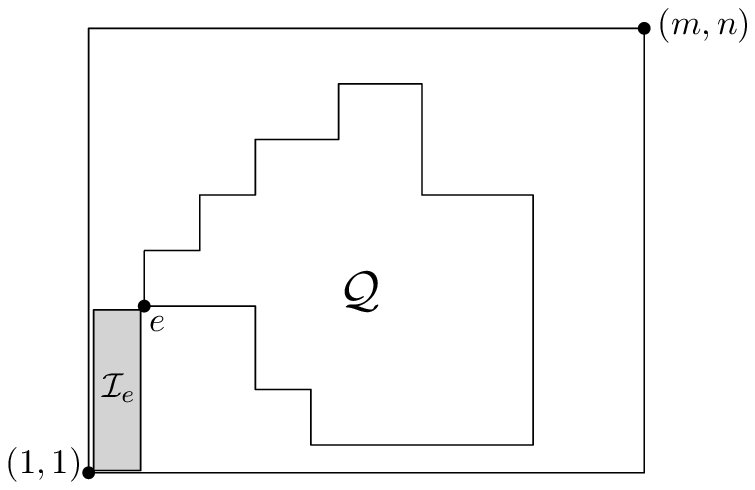}
\caption{(i) interval ${\mathcal I}_e$}\label{ie}
\end{center}
\end{minipage}
\begin{minipage}{0.45\hsize}
\begin{center}
\includegraphics[width  =6 cm]{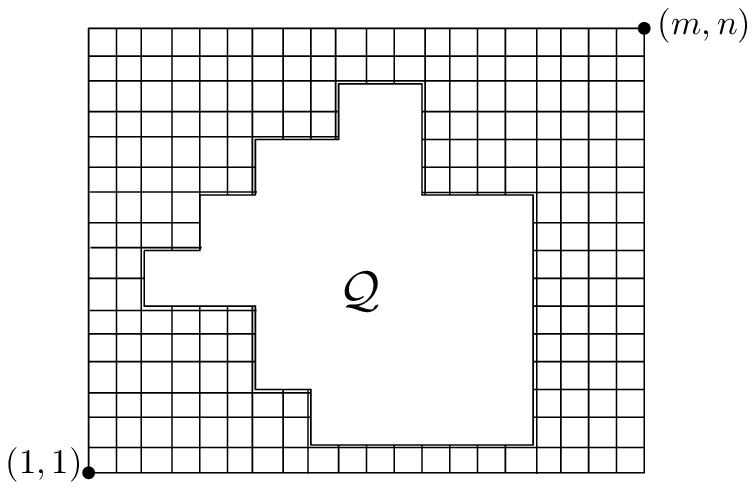}
\caption{(ii) edge intervals}
\end{center}
\end{minipage}
\label{label}
\end{figure}
We denote 
 the set of intervals of types (i) and (ii)  by $\Lambda$.
We define a map $\alpha: V(\P) \rightarrow K[\{u_\Ic\}_{\Ic \in \Lambda}]$
by $\displaystyle v \mapsto \prod_{\substack{ v \in \Ic\\ \Ic \in \Lambda}}u_\Ic$.
Now we define the toric ring and the toric ideal.
The {\em toric ring} denoted by $T$ is defined as
\[
T = K[\alpha(v) \mid v  \in V(\P)]\subset K[\{u_\Ic\}_{\Ic \in \Lambda}].
\]
Let $\varphi : S \rightarrow T$ be the surjective ring homomorphism with the setting $\varphi(x_{ij}) = \alpha((i,j))$.
The {\em toric ideal} $J_\P$ is the kernel of $\varphi$.
We claim that $J_\P = I_\P$.
In order to prove this, we will repeatedly use the next lemma.

For any binomial $f = f^+ - f^-\in J_\P$, we let $V_+$  be the set of vertices $v$ such that $x_v$ appear in $f^+$. Similarly one defines $V_-$.
A binomial $f$ in a binomial ideal $I \subset S$ is said to be {\em redundant}
if it can be expressed as a linear combination of binomials in $I$ of lower degree.
A binomial is said to be {\em irredundant} if it is not redundant.
\begin{Lemma}\label{reduction}
Let $f = f^+ - f^-$ be a binomial of degree $\ge 3$ belonging to  $J_\P$.
If there exist three vertices $p,q\in V_+$ and $r\in V_-$ such that $p,q$ are diagonal (resp. anti-diagonal) corners of an inner interval and $r$ is one of anti-diagonal (resp. diagonal) corners of the inner interval, then $f$ is redundant in $J_\P$.
\end{Lemma}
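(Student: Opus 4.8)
The plan is to run the standard divisibility–reduction argument for toric ideals, using an inner minor to split off a lower-degree binomial, and to conclude with a primeness (non-zero-divisor) argument. First I would fix notation in the diagonal case: let the inner interval have diagonal corners $p,q\in V_+$ and anti-diagonal corners $r,s$, where $r\in V_-$ is the given vertex and $s$ is the remaining corner. Since the interval is inner, $g=x_px_q-x_rx_s$ is an inner minor, so $g\in I_\P$; moreover $g\in J_\P$, because by the definition of $\alpha$ the two corner-products of any inner interval are carried to the same monomial in $K[\{u_\Ic\}_{\Ic\in\Lambda}]$ (this is the forward inclusion $I_\P\subseteq J_\P$). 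The anti-diagonal case is symmetric, with the roles of the diagonal and anti-diagonal pairs interchanged, so I would treat only the diagonal case.

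Next I would use $p,q\in V_+$ and $r\in V_-$ to write $f^+=x_px_q\,w$ and $f^-=x_r\,w'$ for suitable monomials $w,w'$; here $p\ne q$ since they are diagonal corners, so $x_px_q$ genuinely divides $f^+$, and $\deg w=\deg f-2\ge 1$ because $\deg f\ge 3$. The key computation is then
\[
f-w\,g=\bigl(x_px_q\,w-f^-\bigr)-\bigl(x_px_q\,w-x_rx_s\,w\bigr)=x_rx_s\,w-f^-.
\]
Since $x_r$ divides both $x_rx_s\,w$ and $f^-$, I can factor it out and set $h=x_s\,w-f^-/x_r$, a binomial with $\deg h=\deg f-1$, so that $f-w\,g=x_r\,h$.

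It remains to verify that $h\in J_\P$. Here I would invoke that $J_\P=\Ker\varphi$ is prime (its quotient embeds in the domain $K[\{u_\Ic\}_{\Ic\in\Lambda}]$) and that $x_r\notin J_\P$, since $\varphi(x_r)=\alpha(r)\ne 0$; from $x_r\,h=f-w\,g\in J_\P$ it then follows that $h\in J_\P$. Assembling the pieces gives
\[
f=w\,g+x_r\,h,
\]
which exhibits $f$ as a monomial combination of the two binomials $g\in J_\P$ of degree $2$ and $h\in J_\P$ of degree $\deg f-1$, both of degree strictly less than $\deg f$ precisely because $\deg f\ge 3$. Hence $f$ is redundant, as claimed. (If $h=0$ the identity degenerates to $f=w\,g$, still a monomial multiple of the lower-degree binomial $g$.) The only genuinely non-formal point is the primeness step used to pass from $x_r\,h\in J_\P$ to $h\in J_\P$; everything else is bookkeeping of degrees and divisibility, and the hypothesis $\deg f\ge 3$ is used exactly to guarantee that the auxiliary minor $g$ has strictly smaller degree.
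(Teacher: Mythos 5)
Your proposal is correct and follows essentially the same route as the paper: the identity $f = g\,\frac{f^+}{x_px_q} + x_r\bigl(x_s\frac{f^+}{x_px_q} - \frac{f^-}{x_r}\bigr)$ you derive is exactly the paper's decomposition, and your non-zero-divisor argument (from $x_rh\in J_\P$ and $x_r\notin J_\P$ conclude $h\in J_\P$) is precisely what the paper compresses into the phrase ``since $J_\P$ is a toric ideal.'' You merely make explicit two points the paper leaves implicit — that the inner minor lies in $J_\P$ via the inclusion $I_\P\subseteq J_\P$, and the degenerate case $h=0$ — which is a faithful, slightly more detailed rendering of the same proof.
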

\begin{proof}
Let $s$ be the other corner of the interval determined by $p,q$ and $r$.
Then
\begin{eqnarray*}
f & = & f^+-f^-\\
& = & x_px_q\frac{f^+}{x_px_q}- f^-\\
&=& (x_px_q-x_rx_s)\frac{f^+}{x_px_q} + x_rx_s\frac{f^+}{x_px_q} -x_r\frac{f^-}{x_r}\\
&=&(x_px_q-x_rx_s)\frac{f^+}{x_px_q} +x_r\left( x_s\frac{f^+}{x_px_q} - \frac{f^-}{x_r}\right).
\end{eqnarray*}
Since $x_px_q - x_rx_s$ is an inner minor of $\P$ and since $J_\P$ is a toric ideal, we have the desired conclusion.
\end{proof}

\begin{Theorem}
Let $\P = \P_{[(1,1),(m,n)]} \setminus \Qc$ be a polyomino where $\Qc \subset \P_{[(1,1),(m,n)]}$ is a convex polyomino. Then $I_\P = J_\P$.
\end{Theorem}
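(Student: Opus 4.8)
The plan is to establish the two inclusions separately, the nontrivial one being $J_\P \subseteq I_\P$, which I would prove by induction on the degree of a binomial of $J_\P$, feeding Lemma~\ref{reduction} at each inductive step.

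For $I_\P \subseteq J_\P$ it suffices, since both ideals are binomial and $J_\P = \ker\varphi$, to check that $\varphi$ kills every inner minor. I would first record that for any vertex $v$ the generator $\alpha(v)$ is the product of $u_\Ic$ over the (at most three) intervals of $\Lambda$ containing $v$: the unique maximal horizontal interval through $v$, the unique maximal vertical interval through $v$, and $\Ic_e$ precisely when $v \le e$. Given an inner minor with diagonal corners $(i,j),(k,l)$ and anti-diagonal corners $(i,l),(k,j)$, the interval $[(i,j),(k,l)]$ is inner, so each of its four sides is an edge interval of $\P$; hence the maximal horizontal interval through row $j$ contains both $(i,j)$ and $(k,j)$, the one through row $l$ contains both $(i,l)$ and $(k,l)$, and symmetrically for the two columns. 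Thus every type-(ii) variable occurs with equal multiplicity in $\varphi(f^+)$ and $\varphi(f^-)$. Writing $e=(e_1,e_2)$, the only way the four corners could fail to balance with respect to the single type-(i) variable $u_{\Ic_e}$ is $i \le e_1 < k$ and $j \le e_2 < l$; but then the cell $[e,e+(1,1)] \in \Qc$ (by the choice of $e$) would lie in $[(i,j),(k,l)]$, contradicting that this interval is inner. Hence $\varphi(f)=0$ and $I_\P \subseteq J_\P$.

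For the reverse inclusion I would induct on $d = \deg f$ for $0 \ne f = f^+ - f^- \in J_\P$. In the base case $d = 2$, matching the horizontal and the vertical variables in $\varphi(f^+)=\varphi(f^-)$ forces $f^+$ and $f^-$ to be the two pairs of opposite corners of a rectangle $R$ all of whose sides are edge intervals. Since $\Qc$ is connected, any cell of $\Qc$ meeting $R$ would, together with the unbroken sides of $R$, force all of $\Qc$ to lie inside $R$ (a path in $\Qc$ leaving $R$ would cross a boundary edge shared by two cells of $\Qc$, hence absent from $E(\P)$, breaking a side); the lower-left cell $[e,e+(1,1)]$ of $\Qc$ would then sit in $R$ and unbalance $u_{\Ic_e}$, which is impossible. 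Therefore $R$ is an inner interval and $f$ is an inner minor. For the inductive step $d \ge 3$, the aim is to produce $p,q \in V_+$ and $r \in V_-$ such that $p,q$ are diagonal (or anti-diagonal) corners and $r$ is an anti-diagonal (or diagonal) corner of a common inner interval. Granting this, Lemma~\ref{reduction} writes $f = (x_px_q - x_rx_s)\,\frac{f^+}{x_px_q} + x_r\,g$, where the first summand is an inner minor times a monomial and $g \in J_\P$ is a binomial of degree $d-1$; by the induction hypothesis $g \in I_\P$, so $f \in I_\P$.

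The crux, and the step I expect to be the main obstacle, is the existence of this configuration for $d \ge 3$. I would search for it by extremality: choose $p=(i,j)\in V_+$ with $j$ minimal and then $i$ minimal, and use the balancing of $u_\Ic$ along the horizontal interval through $p$, and along the vertical interval through $p$, to locate partners of $V_-$ sharing the row, respectively the column, of $p$; closing these up produces a small rectangle three of whose corners lie in $V_+$ and $V_-$ as required. The genuinely delicate point is to guarantee that this rectangle is an inner interval, i.e.\ that it meets no cell of $\Qc$. Here I would combine the convexity of $\Qc$ and the description of the components of $\Ic \setminus \Qc$ as one-sided ladders from Lemma~\ref{idk} with the bookkeeping supplied by $\Ic_e$, arguing that an intruding cell of $\Qc$ near the extremal corner $p$ would again unbalance some $u_\Ic$ in $f$. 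Verifying that the single interval $\Ic_e$, together with convexity, detects every bad placement of the hole relative to the candidate inner intervals arising this way is where the real work of the proof lies.
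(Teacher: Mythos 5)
Your treatment of $I_\P \subseteq J_\P$ is correct (and in fact cleaner than the paper's: the observation that imbalance of $u_{\Ic_e}$ forces $i \le e_1 < k$, $j \le e_2 < l$, hence puts the cell $[e,e+(1,1)] \in \Qc$ inside the supposedly inner interval, is exactly the right point). Your degree-2 case is also sound and matches the paper's in spirit: balancing the maximal horizontal/vertical variables forces the four sides of $[p,q]$ to be edge intervals, and then either $\Qc$ lies entirely inside $[p,q]$ (killed by the $u_{\Ic_e}$-imbalance) or $[p,q]$ is inner. The problem is the case $\deg f \ge 3$, which is where your proposal stops being a proof. You correctly reduce everything to producing, for an irredundant $f$, three vertices forming the configuration of Lemma~\ref{reduction} \emph{inside an inner interval}, and your extremal choice of $p$ together with the balancing of the horizontal and vertical variables does produce a rectangle with $p \in V_+$ at one corner and two vertices of $V_-$ at the anti-diagonal corners. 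But nothing you say rules out that $\Qc$ sits strictly inside that rectangle, or intrudes into it across the two sides far from $p$: the two sides through $p$ are edge intervals by construction, but the rectangle can still fail to be inner, and the $u_{\Ic_e}$-bookkeeping does not detect such a placement (the corners of your rectangle can all avoid $\Ic_e$ while $\Qc$ pokes into the rectangle's interior). You acknowledge this yourself --- ``where the real work of the proof lies'' --- so the crux of the theorem is left open, and it is not at all clear that your single-interval bookkeeping can be pushed through.

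It is worth noting that the paper does \emph{not} attempt what you attempt. Instead of directly exhibiting an inner-interval configuration for an arbitrary high-degree binomial, it argues in two stages: first, a case analysis with Lemma~\ref{reduction} and Lemma~\ref{idk}(c) shows that an irredundant binomial $f \in J_\P$ of degree $\ge 3$ can have \emph{no} vertex in $\Ic_e$ at all; second, such an $f$ then lies in $J_\P \cap K[x_{ij} \mid (i,j) \in V(\P)\setminus \Ic_e]$, which is identified with the polyomino ideal $I_{\P'}$ of the \emph{simple} polyomino $\P'$ consisting of the cells of $\P$ avoiding $\Ic_e$, and the known theorem of Qureshi--Shibuta--Shikama (\cite[Theorem~2.2]{QSS}) says that this ideal is generated by quadrics --- contradicting irredundancy of $f$. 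This reduction to the simple case, with the cited theorem as the engine, is precisely the idea your proposal is missing; without it (or a genuinely complete substitute for the inner-interval verification), the argument does not go through.
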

\begin{proof}
First we show $I_\P \subset J_\P$. Let $x_px_q-x_rx_s$ be an inner minor belonging to $I_\P$. 
Assume that $p$ is the lower left corner, $q$ is the upper right corner,
$r$ is the lower right corner and $s$ is the upper left corner of $\P$.
Since $[p,q]$ is an inner interval, it is clear that $p$ and $r$, and $q$ and $s$ belong to the same maximal horizontal intervals. 
It is also clear that $p$ and $s$, $q$ and $r$ belong to the same maximal vertical intervals. 
To show $f = f^+-f^-= x_px_q-x_rx_s \in J_\P$, it suffices to show that the number of vertices in $V_+= \{p,q\}$ belong to $\I_e$ is equal to the number of vertices in $V_- = \{r,s\}$ belong to $\I_e$.
If $p \notin \I_e$, we see that $q,r,s \notin I_e$ and we are done in this case. 
Suppose that $p \in \I_e$. Since $[p,q]$ is an inner interval, if 
$r \in \I_e$, then we have either both $q$ and $s$ belong to $\I_e$, or both 
$q$ and $s$ do not belong to $\I_e$. In these cases, we see that $x_px_q-x_r x_s \in J_\P$.
Similarly, if $r \notin \I_e$, then the only possibility is that 
$s \in \I_e$ and $q \notin \I_e$.
Thus, we have $I_\P \subset J_\P$. 
\medskip

Next, in order to prove $J_\P \subset I_\P$, it suffices to show that every binomial of degree 2
in $J_\P$ belongs to $I_\P$ and that every irredundant binomial in $J_\P$ is of degree 2.
First we show that every binomial $f \in J_\P$ of degree 2 belongs to $I_\P$.
Suppose that $f = x_px_q-x_rx_s \in J_\P$ is a binomial such that $\{p,q\} \neq \{r,s\}$.

Since $\varphi(x_px_q) = \varphi(x_rx_s)$, we may assume that $[p,q]$ is an interval which has
$r$ and $s$ as its anti-diagonal corners. Assume that the pair $p$ and $r$ and the pair $s$ and $q$ belong to  the same horizontal edge interval. Then we see that the pairs $p$ and $s$ and the pair $r$ and $q$ belong to the same vertical edge interval.
If $[p,q]$ is a inner minor of $\P$ then we are done.
Suppose that $[p,q]$ is not an inner interval.
Then 
we have either $\Qc \subset \P_{[p,q]}$ or $\Qc \not\subset \P_{[p,q]}$ and $\Qc \cap \P_{[p,q]} \neq \emptyset$.
Suppose that $\Qc \subset \P_{[p,q]}$.
We see that $p \in \Ic_e$ and $q,r,s \notin \Ic_e$, where $\Ic_e$ is the interval given in Figure~\ref{ie}. Then, we have $u_{\Ic_e} | \varphi(x_p)$ and $u_{\Ic_e} | \varphi(x_rx_s)$, which contradicts to $x_px_q-x_rx_s \in J_\P$.
Hence this case is not possible.

Suppose that $\Qc \not\subset \P_{[p,q]}$ and $[p,q]$ is not an inner interval of $\P$.
We see that at least one of 
$[p,r]$, $[p,s]$, $[s,q]$ and $[r,q]$ is not an edge interval in $\P$.
Say $[p,r]$ is not an edge interval in $\P$.

\begin{figure}[h]
\includegraphics[width = 4cm]{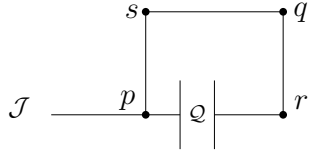}
\caption{the maximal interval}\label{int}
\end{figure}
Suppose that $\Jc \in \Lambda$ is the maximal horizontal edge interval  to which $p$ belongs. Since $x_px_q-x_rx_s \in J_\P$, we see that $u_\Jc | \varphi(x_p)$ and hence $u_\Jc | \varphi(x_rx_s)$.   
This is a contradiction to the fact that neither $r$ nor $s$ belongs to $\Jc$ (see Figure \ref{int}). 
Hence this case is not possible. Thus, every binomial $f \in J_\P$ of degree 2 belongs to $I_\P$.

\medskip

Now we show that every binomial $ f \in J_\P$ with $\deg f \ge 3$ is redundant.
Suppose that $f = f^+ -f^-$ is an irredundant binomial with $\deg f \ge 3$.

First we show that there does not  exist any vertex $v \in V_+ \cup V_-$ such that $v \in \I_e$, where $\I_e$ is the interval shown in Figure \ref{ie}.
To show this, on contrary suppose that there exist $v_1 \in V_+\cap \I_e$.
Since $\varphi(f^+) = \varphi(f^-)$, we have a vertex $v'_1 \in V_- $ such that $v'_1 \in \I_e$.
Also, we have a vertex $v'_2$ such that $v_1$ and $v'_2$ belong to the same maximal vertical  edge interval.
We see that there exist a vertex $v_2 \in V_+$ such that $v_2 $ and $v'_1$ belong to the same horizontal edge interval of $\P$.

If $v'_1$ and $v_1$ are in the same horizontal interval, 
then by applying Lemma \ref{reduction} to the vertices $v_1,v'_1,v'_2$,
we obtain that $f$ is redundant, a contradiction.
By using the same argument, we see that $v_2 \notin \I_e$

Suppose that $v'_1$ and $v_1$ are in the same vertical interval.
Assume that the $v$ is lower than $v'_1$. 
By using Lemma \ref{idk} (c), we observe that  $v_1,v_2,v'_1$ are
three corners of an inner interval. By applying Lemma \ref{reduction},
we see that $f$ is redundant.
Similarly, if $v'_1$ is lower than $v_1$ we obtain that $f$ is redundant.
Hence this case is not possible.

Finally, assume that $v_1$ and $v'_1$ are not in the same edge intervals.
If $v_1$ and $v_2$ belong to the same vertical edge interval,
Then by applying Lemma \ref{reduction} to the vertices
$v_2,v'_1,v'_2$ we are done.
Assume that the second coordinate of $v_1$ is less than that of $v'_1$. Let $g,h$ be the other corners of the inner interval defined by $v'_1$ and $v'_2$.
Assume that $v_1$,$v_2'$ and $g$ belong to the same vertical edge interval.
Then we have $x_{v'_1}x_{
v'_2}-x_gx_h \in J_\P$ and
\begin{eqnarray*}
f &=& f^+-f^-\\
&=&  f ^+ - x_{v'_1}x_{v'_2} \frac{f^-}{x_{v'_1}x_{v'_2}}\\
&=& f^+ - (x_{v'_1}x_{v'_2}-x_gx_h) \frac{f^-}{x_{v'_1}x_{v'_2}}-
x_gx_h\frac{f^-}{x_{v'_1}x_{v'_2}}.
\end{eqnarray*}

Let $f'  = f'^+-f'^-=f^+ - x_gx_h\frac{f^-}{x_{v'_1}x_{v'_2}}$
and let $V'_+$ and $V'_-$ be the vertices appearing in $f'^+$ and $f'^-$. Note that since $f$ and $x_{v'_1}x_{
v'_2}-x_gx_h$ are binomials belong to $J_\P$, $f' \in J_\P$. 

Then by applying Lemma \ref{reduction} to the vertices $v_1,v_2 \in V'_+$ and $g \in V'_-$, we obtain that $f'$ is redundant, which implies that $f$ is redundant.
Thus, the vertices appearing in $f$ does not belong to $\I_e$.
In other words, we have 
$f \in J_\P \cap K[x_{ij}\mid (i,j) \in V(\P) \setminus \Ic_e]$.

Let $\P'$ be the subpolyomino of $\P$ which consists of all cells of $\P$ having no vertices belonging to $\Ic_e$.
Then we have $I_{\P'} = I_\P \cap K[x_{ij}\mid (i,j) \in V(\P) \setminus \Ic_e]$.
We observe that $\P'$ is a simple polyomino.
Then, notice that $\alpha (v)$ for each $v \in \P \setminus \Ic_e$ 
is a monomial of degree 2 determined by the maximal horizontal and vertical intervals to which $v$ belongs.
Then, it is known from
 \cite[Theorem 2.2]{QSS} that  
  $I_{\P'}=I_\P \cap K[x_{ij}\mid (i,j) \in V(\P) \setminus \Ic_e] = J_\P \cap K[x_{ij}\mid (i,j) \in V(\P) \setminus \Ic_e]$.
Note that if $f$ is irredundant in $J_\P$, then it is also irredundant in $J_{\P} \cap K[x_{ij}\mid (i,j) \in V(\P) \setminus \Ic_e] $ since we have $J_{\P}\cap K[x_{ij}\mid (i,j) \in V(\P) \setminus \Ic_e] \subset J_{\P}$. 
We know that $J_{\P}\cap K[x_{ij}\mid (i,j) \in V(\P) \setminus \Ic_e]$ is generated by binomials of degree 2 since we have $I_{\P'}=J_{\P}\cap K[x_{ij}\mid (i,j) \in V(\P) \setminus \Ic_e]$ is generated by binomials of degree 2. This is a contradiction.
Hence we have the desired conclusion.

\end{proof}


\begin{thebibliography}{10}
\bibitem{Co}
A.~Conca,
Ladder determinantal rings,
{\it J. Pure Appl. Algebra} {\bf 98} (1995), 119--134.




\bibitem{HM} J.\ Herzog and S.\ Saeedi\ Madani, The coordinate ring of a simple polyomino, 
{\it Illinois J. Math.} {\bf 58} (2014), 981--995.


\bibitem{HQS} J.\ Herzog, A.\ A.\ Qureshi and A.\ Shikama, Gr\"obner bases of balanced polyominoes, {\it Math. Nachr.} {\bf 288}  (2015), 775--783.

\bibitem{HQ} T.\ Hibi and  A.\ A.\ Qureshi,  Nonsimple polyominoes and prime ideals,  to appear in {\it Illinois J. Math.}

\bibitem{Na}H.~Narasimhan, The irreducibility of ladder determinantal varieties,  { \it J. Algebra} {\bf 102} (1986), 171--223.


\bibitem{Q} A.\ A.\ Qureshi, Ideals generated by 2-minors, collections of cells and stack polyominoes,  {\it J. Algebra} {\bf 357} (2012),  279-–303.
\bibitem{QSS} A.\ A.\ Qureshi, T.\ Shibuta and A.\ Shikama, Simple polyominoes are prime, to appear in {\it  J. Comm. Alg.}
\bibitem{S} A.~Shikama, Toric rings of nonsimple polyominoes,  {\it Int. J. Algebra} {\bf 9} (2015), 195-201.
\bibitem{St} B.\ Sturmfels, ``Gr\"obner bases and convex polytopes,''  Amer. Math. Soc., Providence, RI, 1995.

\end{thebibliography}
\end{document}